\sloppy \theoremstyle{plain}
\renewcommand{\labelenumi}{(\arabic{enumi})}
\renewcommand{\labelenumii}{(\arabic{enumi}\alph{enumii})}
\renewcommand{\labelenumiii}{\arabic{enumi}.\alph{enumii}.\roman{enumiii}}
\newcommand{\chen}[1]{{\color{blue}{Chen: #1}}}
\newcommand{\nir}[1]{{\color{red}{Nir: #1}}}
\newcommand*{\rom}[1]{\expandafter\@slowromancap\romannumeral #1@}
\newtheorem{theorem}{Theorem}[subsection]
\newtheorem{corollary}[theorem]{Corollary}
\newtheorem{lemma}[theorem]{Lemma}
\newtheorem*{lemma*}{Lemma}
\newtheorem{claim}[theorem]{Claim}
\theoremstyle{definition}
\newtheorem{definition}[theorem]{Definition}
\newtheorem{notation}[theorem]{Notation}
\theoremstyle{remark}
\newtheorem{remark}[theorem]{Remark}
\newcounter{mycounter}
\def\A{\Bbb A}
\def\C{\Bbb C}
\def\N{\Bbb N}
\def\O{\Bbb O}
\def\Q{\Bbb Q}
\def\R{\Bbb R}
\def\Z{\Bbb Z}
\def\NU{{N^*}}
\def\QU{K^*}
\def\ZU{O^*}
\def\ZZU{{Z^*}}
\def\ZC{\Bbb O}
\def\QC{\Bbb K}
\def\IC{\Bbb I}
\def\JC{\Bbb J}
\def\ZI{Z^*_\infty}
\def\NI{N^*_\infty}
\def\pr{\ell}
\def\E{\mathrm{E}^{\nom}}
\def\Com{\mathrm{Com}}
\DeclareMathOperator{\ari}{ar}
\DeclareMathOperator{\Cent}{C}
\DeclareMathOperator{\ccl}{gcl}
\DeclareMathOperator{\Der}{D}
\DeclareMathOperator{\diag}{diag}
\DeclareMathOperator{\disc}{disc}
\DeclareMathOperator{\dist}{dist}
\DeclareMathOperator{\Id}{Id}
\DeclareMathOperator{\id}{id}
\DeclareMathOperator{\Ima}{Im}
\DeclareMathOperator{\Isom}{Isom}
\DeclareMathOperator{\Mat}{Mat}
\DeclareMathOperator{\Mod}{mod}
\DeclareMathOperator{\G}{G}
\DeclareMathOperator{\gcl}{gcl}
\DeclareMathOperator{\GL}{ GL}
\DeclareMathOperator{\Mob}{Mob}
\DeclareMathOperator{\Or}{O}
\DeclareMathOperator{\PGL}{PGL}
\DeclareMathOperator{\PSL}{PSL}
\DeclareMathOperator{\rank}{rank}
\DeclareMathOperator{\Rea}{Re}
\DeclareMathOperator{\SL}{SL}
\DeclareMathOperator{\SO}{SO}
\DeclareMathOperator{\Sp}{Sp}
\DeclareMathOperator{\Spin}{Spin}
\DeclareMathOperator{\Sym}{Sym}
\DeclareMathOperator{\SU}{SU}
\DeclareMathOperator{\supp}{supp}
\DeclareMathOperator{\Span}{Span}
\DeclareMathOperator{\Spec}{Spec}
\DeclareMathOperator{\Stab}{Stab}
\DeclareMathOperator{\val}{val}
\DeclareMathOperator{\width}{wid}
\DeclareMathOperator{\Co}{Com}
\DeclareMathOperator{\El}{E}
\title{Bounded Generation for $\SL_n(\Lambda)$}
\author{Nir Avni and Chen Meiri}
\begin{document}
\maketitle

\begin{abstract}
Let $\Lambda$ be an order in a division algebra over a number field. We prove, under some conditions, that $\SL_3(\Lambda)$ is boundedly generated by elementary matrices.
\end{abstract}
	
\section{Introduction}

The width of a subset $X$ of a group $\Gamma$ is the minimal $n \in \mathbb N \cup \left\{\infty \right\}$ for which every element in the group generated by $X$ can be written as a product of at most $n$ elements of $X$ or their inverses. Width questions were studied for many types of groups and subsets, see e.g.,  \cite{BBF19}, \cite{Bor83}, \cite{DV88}, \cite{LaSh09}, \cite{NiSe07a} and \cite{NiSe07b}  and the references therein.

In this article we explain a new approach to width questions in arithmetic groups. We used this approach in \cite{AM} to prove that, under the Generalized Riemann Hypothesis, conjugacy classes have finite width in many higher rank anisotropic groups of orthogonal type. As far as we know, this is the first width result for anisotropic groups. Here, we use a simplified version of the method to prove, unconditionally, new cases of bounded generation.

A group $\Gamma$ is called boundedly generated if there exist finitely many cyclic subgroups whose union has finite width. Carter and Keller proved in \cite{CK83} that if $O$ is the ring of integers in a number field and $n \geq 3$ then the set of elementary matrices in $\SL_n(O)$ has finite width. Thus, $\SL_n(O)$ is boundedly generated. This was extended to rings of $S$-integers in Chevalley groups of higher rank in \cite{Tav91} and to many orthogonal groups in \cite{ER06}. It was expected that all higher rank lattices are boundedly generated, but, surprisingly, \cite{CRRZ22} proved that anisotropic lattices are {\em never} boundedly generated. It is still believed that higher rank isotropic groups are boundedly generated; in this note, we prove bounded generation for a new family of isotropic higher rank arithmetic groups, namely, groups of the form $\SL_d(\Lambda)$, where $d\ge 3$ and $\Lambda$ is an $S$-order in a central division algebra over a number field:

\begin{theorem} \label{thm:main} Let $d\ge 3$, $F$ be a number field, $S$ be a finite set of places of $F$, and $D$ be an $n^2$-dimensional central division algebra over $F$. Let $O$ be the ring of $S$-integers of $F$ and let $\Lambda \subseteq D$ be an $O$-order. Assume that:
\begin{enumerate}[(1)]
\item  $\SL_3(\Lambda)$ is generated by the elementary matrices $\left\{ e_{i,j}(\lambda) \mid \lambda \in \Lambda \right\}$. In particular,  $\SL_3(\Lambda)$ is prefect. 
\item $S$ contains all the archimedean places and at least one finite place.   
\item $\Lambda$ is a free module over $O$.
\item For every place $v \not\in S$, there exists an isomorphism of $O_v$-algebras $\psi_v:\Lambda_v \rightarrow M_n(O_v)$.
\end{enumerate}
Then $\SL_d(\Lambda)$ is boundedly generated by the elementary matrices.
\end{theorem} 

\begin{remark} We will show that it is enough to prove Theorem \ref{thm:main} in the case $d=3$. In the rest of the paper, we mostly consider this case.
\end{remark}

\begin{remark} Let $\overline{F}:=\prod'_{v \notin S } F_v$ be the $S$-congruence completion of $F$ and, similarly, let $\overline{\Lambda}$ and $\overline{D}$ be the $S$-congruence completions of $\Lambda$ and $D$. Under the assumptions above, the isomorphisms $\psi_v$ extend uniquely to isomorphisms $\psi_v:D_v \rightarrow M_n(F_v)$ of $F_v$-algebras and the map $\psi=(\psi_v)_{v \not \in S}:\overline{D}\rightarrow M_n(\overline{F})$ is an isomorphism of $\overline{F}$-algebras such that $\psi(\overline{\Lambda})=M_n(\overline{O})$.
\end{remark} 

{\bf Sketch of the proof:} From now on fix $F,S,D,O,\Lambda$ as in Theorem \ref{thm:main}. In order to prove the theorem, we fix a non-principal ultrafilter and consider the ultrapower $\SL_3(D)^*$ of $\SL_3(D)$. The group $\SL_3(D)^*$ is a topological group with respect to its congruence topology, which extends the congruence topology of $\SL_3(\Lambda)^*$. Denoting the congruence completion of $\SL_3(D)^*$ by $\overline{\SL_3(D)^*}$, we show that if $\SL_3(\Lambda)$ fails to be boundedly generated, then $\overline{\SL_3(D)^*}$ has a non-trivial discrete metaplectic extension, i.e., a non-trivial topological central extension
\[
1 \rightarrow C \rightarrow E \rightarrow \overline{\SL_3(D)^*} \rightarrow 1,
\]
with a discrete kernel, a continuous splitting over $\overline{\SL_3(\Lambda)^*}$, and an abstract splitting over $\SL_3(D)^*$. To finish the proof, we imitate  the computation of the metaplectic kernel of the countable group $\SL_3(D)$ and show that $\overline{\SL_3(D)^*}$ does not have a non-trivial metaplectic extension.

\begin{notation} Let $G$ be a group. For $X,Y \subseteq G$ we denote $[X,Y]:=\{[x,y] \mid x \in X,y \in Y\}$. 
\end{notation}

\subsection{Acknowledgements} 
N.A. was supported by NSF grant DMS-1902041, C.M. was supported by ISF grant 872/23 and BSF grant 2020119. We thank Andrei Rapinchuk for helpful discussions.

\section{Completions}\label{sec:Completions}

	\subsection{The congruence completion }\label{sec:the_congruence_completion}
	
	Let $U$ be a non-principal ultrafilter on $\N$.  For a sequence $(M_n)_n$ of algebraic structures, denote the ultraproduct of  $(M_n)_n$ over $U$ by $\prod_U M_n$. If all the $M_n$s are equal, say to $M$, we call $\prod_U M_n$ the ultrapower of $M$ with respect to $U$ and (sometimes) denote it by $M^*$. Given $(a_n)_n \in \prod_{n\in \N} M_n$, denote its image in $\prod_U M_n$ by $[a_n]_n$. Given a sequence $(A_n)_n$ such that $A_n \subseteq M_n$, for every $n$, denote 
    \[
    [A_n]_n:=\left\{[a_n]_n\in \sideset{}{_U}\prod M_n\quad \middle | \quad \{n \in \N\, |\, a_n \in A_n \}\in U\right\} .
    \]
	
	Denote the set of non-zero finitely generated ideals of $O^*$ by $\mathcal{I}$ and let $\mathcal{P} \subseteq \mathcal{I}$ be the set of finitely generated prime ideals. Since every ideal of $O$ is generated by two elements, a subset $A$ of $O^*$ is a finitely generated (prime) ideal if and only if there exists a sequence $(\mathfrak{q}_n)_n$  of (prime) ideals of	$O$ such that $A=[\mathfrak{q}_n]_n$.  In particular, every $\mathfrak{q} \in \mathcal{I}$   is generated by two elements. 

    Since every ideal contains a finitely generated ideal, the set $\mathcal{I}$ is a basis of open neighborhoods of zero for the congruence topology of $O^*$. Thus, the completion $\overline{O^*}$ of $O^*$ with respect to the congruence topology can be identified as
	\[	 \overline{O^*}=\underset{_{\mathfrak{q} \in \mathcal{I}}}{\lim_{\longleftarrow} }\,O^*/\mathfrak{q}.\]
 While inverse limits of non-compact spaces can be empty, even if all the maps are surjective, in our case, the map $a \mapsto (a+\mathfrak{q})_\mathfrak{q}$ is an embedding of $O^*$ in $\overline{O^*}$.  In fact,  $\prod_U \overline{O}$ is embedded in $\overline{O^*}$ where $\overline{O}$ is the congruence completion of $O$.
	Indeed, for every finitely generated ideal $\mathfrak{q}=[\mathfrak{q}_n]_n \lhd O^*$, 
	\begin{itemize}
		\item There exists a quotient  map $\rho_{\mathfrak{q}}:\prod_{U} \overline{O} \rightarrow \prod_U O/\mathfrak{q}_n$.
		\item  The map $\iota_{\mathfrak{q}}:\prod_U O/\mathfrak{q}_n \rightarrow O^*/\mathfrak{q}$ 	defined by  $\iota_{\mathfrak{q}}([a_n+\mathfrak{q}_n]_n ):= [a_n]_n+\mathfrak{q}$ is a bijection. 
	\end{itemize}
	It follows that the map $\iota:\prod_U \overline{O}\rightarrow \overline{O^*}$ defined by \[\iota([a_n]_n):=\big(\iota_{\mathfrak{q}}\left(\rho_{\mathfrak{q}}\left([a_n]_n\right)\right)\big)_{\mathfrak{q} \text{ is s finitely generated ideal of $O^*$} }\]
	is an embedding of $\prod_U \overline{O}$ in $\overline{O^*}$. We do not know whether this embedding is surjective.  From now on, we use this embedding to identify $\prod_U \overline{O}$ as a subring of $\overline{O^*}$. In particular, $\prod_U \overline{O}$ is a topological ring under the topology inherited from $ \overline{O^*}$ and, by the universal property of completion, $\overline{O^*}$ is the completion of $\prod_U \overline{O}$ with respect to this topology. 
	
	The congruence topology of $O^*$ can be extended to a {\bf ring} topology on $F^*$. We refer to this topology as the $S$-congruence topology of $F^*$ and denote the completion of $F^*$ with respect to the $S$-congruence topology by $\overline{F^*}$. As before, $\prod_U\overline{F}$ embeds as a subring of $\overline{F^*}$  and $\overline{F^*}$ is  the completion of $\prod_U\overline{F}$.

	\begin{definition} A set $P$ of primes ideals of $O^*$ is called an internal set of prime ideals (or internal set for short) if, for every $n$, there exists a set $P_n$ of non-zero prime ideals of $O$ such that \[P=[P_n]_n:=\{[\mathfrak{p}_n]_n \mid \{ n \mid \mathfrak{p}_n \in P_n\}\in U \}.\] If all the sets $P_n$ are nonempty then $[P_n]_n=\{[\mathfrak{p}_n]_n \mid (\forall n)\ \mathfrak{p}_n \in P_n\}$.
    If all the sets $P_n$ can be chosen to be finite, we say that $P$ is small internal. 
	\end{definition}
	
	If $P=[P_n]_n$ and $Q=[Q_n]_n$ are internal subsets then $P \cap Q=[P_n \cap Q_n]_n$ is an internal subset and $P^c:=[(P_n)^c]_n$ is an internal set where for every $n$, $(P_n)^c$ is the set of all non-zero prime ideals of $O$ that are not in $P_n$. 
    
    If $X$ is a set of non-zero prime ideals of $O$ let $\pi_X:\overline{F} \rightarrow \overline{F}$ be the projection to $\prod_{\mathfrak{p} \in X} F_\mathfrak{p} \times \prod_{\mathfrak{q} \notin X} 0$. Given a non-empty internal subset $P=[P_n]_n$, let $\pi_P:\prod_U \overline{F}\rightarrow \prod_U \overline{F}$ be the map
	$\pi_P([a_n]_n)=[\pi_{P_n}(a_n)]_n$. The map $\pi_P$ is continuous; denote its image by $(\prod_U \overline{F})_P$. If, in addition, $P\neq \mathcal{P}$ then the map $\pi_P \times \pi_{P^c}:\prod_U \overline{F} \rightarrow (\prod_U \overline{F})_P \times (\prod_U \overline{F})_{P^c}$ is an isomorphism of topological rings. \\ 
    
    Let $G \subseteq\GL_k$ be an affine group $F$-scheme. The $S$-congruence topology of $F^*$ induces a topology on $G(F^*)$, which is also called the $S$-congruence topology. Denote the completion of $G(F^*)$ under this topology by $\overline{G(F^*)}$. The embedding $G(F^*) \rightarrow G(\overline{F^*})$ extends to an injective continuous homomorphism $\overline{G(F^*)}\rightarrow G(\overline{F^*})$. 
    
\begin{lemma} (\cite[Corollary 3.6.5]{AM}) If $G(F)$ has Strong Approximation, then the map $\overline{G(F^*)} \rightarrow G(\overline{F^*})$ is an isomorphism of topological groups.
\end{lemma} 

When the lemma holds, we identify $\overline{G(F^*)}$ with $ G(\overline{F^*})$.

\begin{proof} To prove the lemma, it suffices to show that $G(F^*)$ is dense in $G(\overline{F^*})$, i.e., that for every $a\in G(\overline{F^*})$ and every $I\in \mathcal{I}$, the intersection $\left( a+I\Mat_k(\overline{O^*})\right) \cap G(F^*)$ is non-empty.

We first rephrase Strong Approximation in a way that passes to ultrapowers. Suppose that $G \subseteq \Mat_k$ is the zero locus of a polynomial map $p:\Mat_k \rightarrow \mathbb{A}^r$. For a subset $U \subseteq \Mat_k(F)$, say that $U$ almost contains a $G$-point if the congruence closure of $p(U)$ in $F^r$ contains 0. Since $\overline{O}$ is compact, the following conditions on a point $x\in \Mat_k(F)$ and an ideal $I \triangleleft O$ are equivalent: \begin{itemize} 
\item $x+I\Mat_k(O)$ almost contains a $G$-point.
\item $\left( x+I\Mat_k(\overline{O}) \right) \cap G(\overline{F})\neq \emptyset$.
\end{itemize}
Thus, Strong Approximation for $G$ is equivalent to the following first order claim:
\begin{enumerate}[(a)]
\item For all $x\in \Mat_k(F)$ and a non-zero finitely generated ideal $I\lhd O$, if $x+I\Mat_k(O)$ almost contains a $G$-point, then $\left( x+I\Mat_k(O)\right) \cap G(F)\neq \emptyset$.
\setcounter{mycounter}{\value{enumi}}
\end{enumerate}
By taking ultrapowers, we get the following claim:
\begin{enumerate}[(a)]
\setcounter{enumi}{\value{mycounter}}
\item \label{item:SAUP} For all $x\in \Mat_k(F^*)$ and a non-zero $I\in \mathcal{I}$, if $x+I\Mat_k(O^*)$ almost contains a $G$-point, then $\left( x+I\Mat_k(O^*)\right) \cap G(F^*)\neq \emptyset$.
\end{enumerate}

Finally, let $a\in G(\overline{F^*})$ and let $I\in \mathcal{I}$. Choose a point $b\in \left( a+I\Mat_k(\overline{O^*})\right) \cap \Mat_k(F^*)$. Since the set $b+I\Mat_k(O^*)$ contains $a$ in its closure, $b+I\Mat_k(O^*)$ almost contains a $G$-point. Thus, 
\[
\left( a+I\Mat_k(\overline{O^*})\right) \cap G(F^*)=\left( b+I\Mat_k(O^*)\right) \cap G(F^*)\neq \emptyset,
\]
where the right inequality follows from Item \ref{item:SAUP}.
\end{proof} 

    Given an affine group $F$-scheme $G \subseteq \GL_k$ such that $G(F)$ has Strong Approximation with respect to the $S$-congruence topology and a non-empty internal set $P$ of nonzore prime ideals, define $\pi_P:G(\prod_U \overline{F})\rightarrow G(\prod_U \overline{F}) $    by 
	\[
	\pi_P\left(\left( a_{i,j})_{1\le i,j \le k}\right)\right):=\left( \pi_P(a_{i,j}-\delta_{i,j})+\delta_{i,j}   
	\right)_{1\le i,j \le k},
	\]
	where $\delta_{i,j}$ is Kronecker's delta. As before, the image of $G(\prod_U \overline{F})$  under $\pi_P$ is denoted by $G(\prod_U \overline{F})_P$.  If $P$ and  $P^c$ are non-empty then the map $\pi_P \times \pi_{P^c}:G(\prod_U \overline{F}) \rightarrow G(\prod_U \overline{F})_P \times G(\prod_U \overline{F})_{P^c}$ is an isomorphism of topological groups.
	
	More generally, if $(H_n)_n$ is a sequence of affine group $F$-subschemes of $G$ of bounded complexity then 
	$H:=\prod_U H_n$ is an affine group  $F^*$-subscheme of the affine group $F^*$-scheme $G \times \Spec(F^*)$. For every non-empty  internal set of prime ideals $P$, we have $\pi_P(H(\prod_U \overline{F})) \subseteq H(\prod_U \overline{F})$.   If $H_n(F)$ satisfies Strong Approximation for every $n$ then so does $H(F^*)$, i.e., the map $\overline{H(F^*)}\rightarrow H(\overline{F^*})$ is an isomorphism of topological groups. Finally, if $P$ is a non-empty internal subset   and $H$ satisfies Strong Approximation then   $\pi_P(H(\prod_U \overline{F})) =H(\prod_U \overline{F}) \cap G(\prod_U \overline{F})_P $.
	
	\subsection{The commgruence completion}

\begin{definition} 	We  denote the commutator subgroup of $\SL_3(\Lambda^*)$ by $\Co(\Lambda^*)$.  For every non-zero ideal $\mathfrak{q} \lhd O^*$, we denote the $\mathfrak{q}$th congruence subgroup of $\SL_3(\Lambda ^*)$ by $\SL_3(\Lambda ^*; \mathfrak{q})$ and denote $\Co(\Lambda^*;\mathfrak{q}):= \Co(\Lambda^*) \cap \SL_3(\Lambda^*;\mathfrak{q})$. Every group which contains a group of the form $\Co(\Lambda^*;\mathfrak{q})$ for some non-zero  $\mathfrak{q} \lhd O^*$, is called a commgruence subgroup.  The commgruence topology is  coarsest group topology under which all commgruence subgroups are open. 
	\end{definition}

	Our next goal is to compute the commgruence completion $\widehat{\SL_3(\Lambda^*)}$ of $\SL_3(\Lambda^*)$.  
	By our assumption, $\SL_3(\Lambda)$ is perfect. Local computations show that  there exists a constant $c$ such that, for every non-zero ideal $\mathfrak{q}$ of $O$, $\SL_3(\Lambda)=[\SL_3(\Lambda),\SL_3(\Lambda)]^c\SL_3(\Lambda;\mathfrak{q})$. It follows that the same is true when $\SL_3(\Lambda)$ and $\mathfrak{q}$ are replaced by   $\SL_3(\Lambda^*)$ and any ideal in $\mathcal{I}$. In particular, for every $\mathfrak{q} \in \mathcal{I}$, 
	\[
	\SL_3(\Lambda^*)/\Co(\Lambda^*;\mathfrak{q})=\SL_3(\Lambda^*)/\Co(\Lambda^*) \times \SL_3(\Lambda^*)/\SL_3(\Lambda^*;\mathfrak{q}).
	\]
	
Taking inverse limit, we get
	\begin{lemma}\label{lemma:commgruence _completion }
		There exists an isomorphism of topological groups 
		\begin{equation}\label{eq:commgruence_completion}  
			\widehat{\SL_3(\Lambda^*)}=\SL_3(\Lambda^*)_{ab} \times \overline{\SL_3(\Lambda^*)},
		\end{equation}
		where $\SL_3(\Lambda^*)_{ab}$ is the abelization of $\SL_3(\Lambda ^*)$ with the discrete topology. 
	\end{lemma}

The next step is to show that the commgruence topology of $\SL_3(\Lambda^*)$ can be extended to a group  topology on $\SL_3(D^*)$. We will need the following:

\begin{definition} Let $A$ be an associative ring. For a natural number $n$ and an ideal $\mathfrak{q}$ of $A$, let \begin{enumerate}
\item $E_n(A) \subseteq \GL_n(A)$ be the subgroup generated by the set of elementary matrices $\{e_{i,j}(a)\mid 1 \le i \ne j \le n, a \in A\}$.
\item $E_n(A; \mathfrak{q})$ be the normal subgroup of $E_n(A)$ generated by all $E_n(A)$-conjugates of   the  set  $\{e_{i,j}(a)\mid 1 \le i \ne j \le n, a \in \mathfrak{q}\}$.
\end{enumerate} 
\end{definition} 

\begin{definition} The stable range of a unital ring $A$ is the infimum of the set of positive integers $n$ such that whenever $a_0,\ldots,a_n\in A$ generate the unit ideal, there are $b_1,\ldots,b_n\in A$ such that $a_1-b_1a_0,\ldots,a_n-b_na_0$ also generate the unit ideal.
\end{definition}

	\begin{theorem}[\cite{Bas}]\label{thm:Bass}
		\begin{enumerate}
			\item[] 
			\item\label{item:Ba2} (Item (f) of Theorem 4.2)  For every ring $A$ of stable range  $r$, every non-zero $\mathfrak{q} \lhd A$, and   every $n >\max(r,2)$,
			\[
			\El_n(A,\mathfrak{q})=\left\langle \left[\GL_n(A),\GL_n(A;\mathfrak{q})\right]  \right\rangle.
			\]			
			\item\label{item:Ba1} (Theorem 11.1)  The stable range of every order in a finite dimensional central division algebra over a number field is at most 2. 
			\item \label{item:Ba3}(Item (c) of Theorem 11.2) If $A$ is a division algebra and $n \ge 3$ then $\El_n(A)$ is the commutator subgroup of $\GL_n(A)$.
		\end{enumerate}
	\end{theorem}	
	
	\begin{corollary}\label{cor:Bass}
For every non-zero $\mathfrak{q} \lhd O^*$,
		\begin{equation}\label{eq:Bass}
			[\SL_3(\Lambda^*), \SL_3(\Lambda^*;\mathfrak{q}^2) ] \subseteq \El_3(\Lambda^*;\mathfrak{q}^2\Lambda^*)\subseteq \langle [\SL_3(\Lambda^*;\mathfrak{q}), \SL_3(\Lambda^*;\mathfrak{q}) ]\rangle.
		\end{equation}
	\end{corollary}
	\begin{proof}
		The stable range condition can be expressed in the first order language of rings. Thus, by Item \eqref{item:Ba1} of \ref{thm:Bass}, the stable range of $\Lambda^*$ is at most 2. By Item \eqref{item:Ba2} of \ref{thm:Bass}, for every non-zero $\mathfrak{q} \lhd O^*$,
		\[	
		\El_3(\Lambda^*,\mathfrak{q}\Lambda^*)=\left\langle \left[\GL_3(\Lambda^*),\GL_3(\Lambda^*;\mathfrak{q}\Lambda^*)\right]  \right\rangle
		\]
		and the left inclusion in Equation \eqref{eq:Bass} follows. The right inclusion holds since, for every $a,b \in \mathfrak{q}\Lambda^*$ and every distinct 
		$1 \le i,j,k \le 3$, $[e_{i,k}(a),e_{k,j}(b)]=e_{i,j}(ab)$.
	\end{proof}

	The following theorem was attributed to Draxl in \cite{DV88} but the reference given there does not seem to exist in Mathscinet.     
    The special case where the degree of the algebra is square free follows from the last corollary of \cite{Wan} and a compactness argument. 
	
	\begin{theorem}\label{thm:Draxl}
		Let $m \ge 1$  and $A$ a finite dimensional central division algebra over a number field $K$. There exists a constant $c$ such that every element of $\SL_m(A)$ is the product of $c$ commutators from $\GL_m(A)$.
	\end{theorem}
	
	\begin{lemma}\label{lemma:Bounded_over_D} The group $\SL_3(D^*)$ is boundedly generated by the elementary matrices. In particular,  
		$\SL_3(D^*)$ is perfect and normally generated by $\SL_3(\Lambda^*)$. 
	\end{lemma}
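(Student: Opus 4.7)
The plan is to combine three first-order ingredients, each verified for $D$ itself and then transferred to the ultrapower $D^*$ by standard ultraproduct transfer. The first is routine Gauss elimination over a division ring: combined with Whitehead's lemma, which expresses each of $\diag(a,a^{-1},1)$, $\diag(a,1,a^{-1})$, and $\diag(1,a,a^{-1})$ as a bounded product of elementary matrices, it yields a constant $N_0$ such that every $g\in\SL_3(R)$ over any division ring $R$ can be written as $\sigma\cdot\diag(c,1,1)$, with $\sigma$ a product of at most $N_0$ elementaries and $c$ in the kernel $\SL_1(R)$ of the reduced norm. Since $D^*$ is a division ring and the assertion is first-order for fixed $N_0$, it applies to $\SL_3(D^*)$. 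The second ingredient is Theorem \ref{thm:Draxl} in the case $m=1$: there is a constant $c'$ such that every element of $\SL_1(D)$ is a product of at most $c'$ commutators $[\alpha_i,\beta_i]$ with $\alpha_i,\beta_i\in D^\times$, and by transfer the same will hold for $\SL_1(D^*)$.

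The third ingredient, and the main non-routine step, is the universal diagonal identity
\[
\diag(1,1,[\alpha,\beta])=\bigl[\,\diag(\alpha^{-1},1,\alpha),\;\diag(1,\beta^{-1},\beta)\,\bigr],
\]
verified by a direct multiplication of diagonal matrices. Both matrices on the right lie in $\SL_3$ and are bounded products of elementary matrices by Whitehead, so their commutator is too; one further Whitehead manipulation will then express $\diag([\alpha,\beta],1,1)$ in the same way. Assembling the three ingredients: given $g\in\SL_3(D^*)$, I would write $g=\sigma\cdot\diag(c,1,1)$ with $c=\prod_{i=1}^{c'}[\alpha_i,\beta_i]$, so that $\diag(c,1,1)=\prod_i\diag([\alpha_i,\beta_i],1,1)$ is a bounded product of elementary matrices, and hence so is $g$. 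The hard step is precisely the commutator identity above: without it, Gauss elimination plus Draxl leave diagonal factors $\diag(c,1,1)$ with $c\in\SL_1$ that naive Whitehead moves cannot absorb into the elementary count, since the nontrivial entry is not balanced by an $a^{-1}$.

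For the ``in particular'' assertion, each elementary matrix is a single commutator $e_{ij}(a)=[e_{ik}(a),e_{kj}(1)]$ for $k\notin\{i,j\}$, so bounded elementary generation will imply $\SL_3(D^*)$ is perfect. For normal generation by $\SL_3(\Lambda^*)$, for every $a\in (D^*)^\times$ and each pair $i\neq j$ I will use the reduced-norm-one diagonal matrix with entries $a$ in position $i$, $1$ in position $j$, and $a^{-1}$ in the third position: it lies in $\SL_3(D^*)$ and conjugates $e_{ij}(1)\in\SL_3(\Lambda^*)$ to $e_{ij}(a)$. Combined with bounded generation by elementaries, this shows $\SL_3(D^*)$ equals the normal closure of $\SL_3(\Lambda^*)$.
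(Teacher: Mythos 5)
Your proposal is correct, and for the main claim it takes a genuinely different route from the paper. The paper applies Theorem \ref{thm:Draxl} with $m=3$ to write each element of $\SL_3(D^*)$ as a bounded product of commutators from $\GL_3(D^*)$, and then invokes Item (3) of Theorem \ref{thm:Bass} (that $E_3(A)=[\GL_3(A),\GL_3(A)]$ for a division algebra $A$) together with a compactness argument to convert each such commutator into a bounded product of elementary matrices. You instead apply Draxl only in the case $m=1$ and handle the matrix-level bookkeeping by hand: Gauss elimination over a division ring reduces $g$ to $\sigma\cdot\diag(c,1,1)$ with $\sigma$ a bounded product of elementaries and $\mathrm{Nrd}(c)=1$ (since elementaries have trivial Dieudonn\'e determinant), and your identity $\diag(1,1,[\alpha,\beta])=\bigl[\diag(\alpha^{-1},1,\alpha),\diag(1,\beta^{-1},\beta)\bigr]$ — which checks out, entry by entry — together with Whitehead's lemma absorbs each $\SL_1$-commutator into a bounded product of elementaries. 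What your approach buys is an explicit, uniform bound with no appeal to Bass's Theorem 11.2(c) and no compactness argument to upgrade a qualitative generation statement to a quantitative one; every ingredient is first-order for a fixed length and transfers directly to $D^*$. What the paper's approach buys is brevity, at the cost of the (routine but necessary) compactness step. Your treatment of the two ``in particular'' assertions matches the paper's: perfectness via $e_{ij}(a)=[e_{ik}(a),e_{kj}(1)]$, and normal generation by conjugating $e_{ij}(1)\in\SL_3(\Lambda^*)$ by a reduced-norm-one diagonal matrix; your explicit conjugator is a fine instance of the paper's one-line remark that every elementary matrix is conjugate to one in $\SL_3(\Lambda^*)$.
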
 

\begin{proof} Theorem \ref{thm:Draxl} implies that every element of $\SL_3(D^*)$ is a product of a bounded number of commutators from $\GL_3(D^*)$. Item \eqref{item:Ba3} of Theorem \ref{thm:Bass} and a compactness argument imply that every commutator in $\GL_3(D^*)$ is a product of boundedly many elementary matrices, so the first claim holds.

Since every elementary matrix is the commutator of two elementary matrices, $\SL_3(D^*)$ is perfect.  Finally, every elementary matrix is conjugate to an elementary matrix in $\SL_3(\Lambda^*)$. 
\end{proof}	
	
	\begin{lemma}\label{lemma:com_topoolgy_extends}
		The commgruence topology  can be extended to a group topology on $\SL_3(D^*)$. 
	\end{lemma} 
	\begin{proof}
		It is enough to show that for every $g \in \SL_3(D^*)$ and every $\mathfrak{q} \in \mathcal{I}$ there exists a non-zero $\mathfrak{r} \in \mathcal{I}$ such that $g\Co(\Lambda^*;\mathfrak{r})g^{-1} \subseteq \Co(\Lambda^*;\mathfrak{q})$.  By Lemma \ref{lemma:Bounded_over_D}, every element of $\SL_3(D^*)$ is a product of a finite number of conjugates of elements from $\SL_3(\Lambda^*)$. Thus, by induction on this number, it is enough to prove the claim when $g$ is a conjugate of an element in $\SL_3(\Lambda^*)$. 
		
		Let $g \in \SL_3(D^*)$ and $h \in \SL_3(\Lambda^*)$.  
		Choose non-zero finitely generated ideals $\mathfrak{r},\mathfrak{s}\lhd O^*$ such that
		\begin{itemize}
			\item $g\SL_3(\Lambda^*;\mathfrak{s})g^{-1} \subseteq \SL_3[\Lambda^*;\mathfrak{q}]$.
			\item $g^{-1}\SL_3(\Lambda^*;\mathfrak{r})g \subseteq \SL_3[\Lambda^*;\mathfrak{s}^2]$ and $\mathfrak{r} \subseteq \mathfrak{q}$. 
		\end{itemize}
		Corollary \ref{cor:Bass} implies that  $[\SL_3(\Lambda^*),\SL_3(\Lambda^*;\mathfrak{s}^2)] \subseteq \langle [\SL_3(\Lambda;\mathfrak{s}), \SL_3(\Lambda;\mathfrak{s})]\rangle$. Thus, for every
		$f \in \Co(\Lambda^*;\mathfrak{r})$,
		$$
		g(hg^{-1}fgh^{-1})g^{-1}=g[h,g^{-1}fg]g^{-1}f \in  g[\SL_3(\Lambda^*),\SL_3(\Lambda^*;\mathfrak{s}^2)]g^{-1}f \subseteq  g \langle [\SL_3(\Lambda;\mathfrak{s}), \SL_3(\Lambda;\mathfrak{s})]\rangle g^{-1}f\subseteq  \Co(\Lambda^*;\mathfrak{q}).$$
	\end{proof}

	\section{Discrete metaplectic extensions}


	\begin{definition}
		A discrete metaplectic extension of $\SL_3(D^*)$ is a topological central extension 
		\[
		1 \rightarrow C \rightarrow E \overset{\rho}\rightarrow  \overline{\SL_3(D^*)} \rightarrow 1.
		\]
		such that 
		\begin{enumerate}
			\item The topology on $C$ is discrete.
			\item $\rho$ has  a section over $\SL_3(D^*)$.
			\item $\rho$ has  a continuous  section over $\SL_3(\overline{\Lambda^*})$.
		\end{enumerate}
	\end{definition}
	
	The goal of this section is to show that, under the assumptions of Theorem \ref{thm:main}, if $\SL_3(\Lambda)$ is not boundedly generated by elementary matrices, then there exists a discrete metaplectic extension of $\SL_3(D^*)$ which does not have a continuous section. 
	
	By the assumptions of Theorem \ref{thm:main}, $\SL_3(\Lambda)=\El_3(\Lambda)$ is perfect.  Item \ref{item:Ba2} of Theorem \ref{thm:Bass} and a standard compactness argument show that every element in the set $[\SL_3(\Lambda),\SL_3(\Lambda)]$ of commutators is a product of a bounded number of elementary matrices. On the other hand, every elementary matrix is the commutator of two elementary matrices.   Thus, every element in $\SL_3(\Lambda)$ is the product of boundedly many elementary matrices if and only if every element in $\SL_3(\Lambda)$ is the product of boundedly many elements from $[\SL_3(\Lambda),\SL_3(\Lambda)]$.    A standard model theoretic  argument shows that the following properties are equivalent: 
	\begin{enumerate}[(1)]
		\item  Every element in $\SL_3(\Lambda)$ is the product of boundedly many elements from $[\SL_3(\Lambda),\SL_3(\Lambda)]$. 
		\item The abelianization  $\SL_3(\Lambda^*)_{ab}$ of  $\SL_3(\Lambda^*)$ is trivial.   
		\item The abelianization  $\SL_3(\Lambda^*)_{ab}$ of  $\SL_3(\Lambda^*)$ is finite.   
	\end{enumerate}

	By Lemma \ref{lemma:commgruence _completion }, there exists a central topological extension  
	\begin{equation}\label{eq:integral_extesion}
		1 \rightarrow \SL_3(\Lambda^*)_{ab} \rightarrow \widehat{\SL_3(\Lambda^*)}=\SL_3(\Lambda^*)_{ab} \times \overline{\SL_3(\Lambda^*)}\rightarrow  \overline{\SL_3(\Lambda^*)} \rightarrow 1
	\end{equation}
	where the abelianization $\SL_3(\Lambda^*)_{ab}$ of $\SL_3(\Lambda^*)$ is given the discrete topology. 

By Lemma \ref{lemma:com_topoolgy_extends}, the commgruence topology extends to $\SL_3(D^*)$. Since the commgruence topology of $\SL_3(\Lambda ^*)$ has a completion, so does the commgruence topology on $\SL_3(D^*)$. Denoting the completion by $\widehat{\SL_3(D^*)}$, we get a short exact sequence
\begin{equation}\label{eq:rational_extesion.1}
		1 \rightarrow C \rightarrow \widehat{\SL_3(D^*)}\overset{\rho}\rightarrow  \overline{\SL_3(D^*)} \rightarrow 1.
	\end{equation}
Since $\widehat{\SL_3(\Lambda ^*)}$ is open in $\widehat{\SL_3(D^*)}$, the kernel $C$ is $\SL_3(\Lambda ^*)_{ab}$. By Lemma \ref{lemma:Bounded_over_D}, $C$ is central in $\widehat{\SL_3(D^*)}$. Thus, we get a discrete metaplectic extension
\begin{equation}\label{eq:rational_extesion}
		1 \rightarrow \SL_3(\Lambda^*)_{ab} \rightarrow \widehat{\SL_3(D^*)}\overset{\rho}\rightarrow  \overline{\SL_3(D^*)} \rightarrow 1.
	\end{equation}
    \begin{lemma}\label{lemma:dis_meta_exten_no_cont_sec}
		If $\SL_3(\Lambda)$ is not boundedly generated by elementary matrices, then there exists a discrete metaplectic extension of $\SL_3(D^*)$ which does not have a continuous section. 
	\end{lemma}
	\begin{proof}
		Assume that  extension \eqref{eq:rational_extesion} 
		has a continuous section $\eta:  \overline{\SL_3(D^*)} \rightarrow \widehat{\SL_3(D^*)}$.  The map $\chi:\widehat{\SL_3(D^*)}\rightarrow C$ defined by $\chi(g):=g^{-1}\eta( \rho(g))$
		is a continuous epimorphism from $\widehat{\SL_3(D^*)}$ onto the abelian group $\SL_3(\Lambda^*)_{ab}$. By Lemma \ref{lemma:Bounded_over_D}, the dense subgroup $\SL_3(D^*)$ of $\widehat{\SL_3(D^*)}$ is prefect. Thus, $\SL_3(\Lambda^*)_{ab}=\chi(\widehat{\SL_3(D^*)})=1$. 
	\end{proof}

\section{Proof of Theorem \ref{thm:main} }
{\bf Step A:} View $\SL_3(\Lambda)$ as a subgroup of $\SL_d(\Lambda)$ via the embedding that sends a matrix $A \in \SL_3(\Lambda)$ to the diagonal matrix  $\diag(A,1,\ldots,1)$.
    Since $\Lambda$ has stable range 2, every element of $\SL_d(\Lambda)$ can be transformed to an element of $\SL_3(\Lambda)$ by a multiplication with a bounded number of elementary matrices. Thus, it is enough to prove Theorem \ref{thm:main} for $d=3$. By Lemma \ref{lemma:dis_meta_exten_no_cont_sec}, in order to prove Theorem \ref{thm:main} it is enough to show that every discrete metaplectic extension of $\SL_3(D^*)$ has a continuous section. 
	Let 
	\[
	1 \rightarrow C \rightarrow E \overset{\rho}\rightarrow \SL_3(\overline{D^*})\rightarrow 1
	\]
	be a discrete metaplectic extension and let $\eta:\SL_3(\overline{\Lambda^*})\rightarrow E$ be a continuous section  over $\SL_3(\overline{\Lambda^*})$. \\\\
	{\bf Step B:}  The topological isomorphism $\psi: \overline{D}\rightarrow M_n(\overline{F})$ is a linear map defined over $\overline{F}$ so it induces a $\prod_U \overline{F}$-linear topological  isomorphism $\psi:\prod_U \overline{D} \rightarrow \prod_U M_n(\overline{F})$ where the topologies on the domain and range are induced from the embeddings of  $\prod_U \overline{D}$ and $\prod_U M_n(\overline{F})$  in  $\overline{D^*}$ and  $M_n(\overline{F^*})$.  By the universal property of completions, $\psi $ has a unique extension to a topological isomorphism $\psi:\overline{D^*}\rightarrow M_n(\overline{F^*})$. 
	Since $\psi(\overline{\Lambda})=  M_n(\overline{O})$, 
	$\psi(\prod_U \overline{\Lambda})=  \prod_U M_n(\overline{O})$ and $\psi(\overline{\Lambda^*})=M_n(\overline{O^*})$.
	Moreover, $\psi$ induces  a  topological isomorphism $\psi:{\SL_3(\overline{D^*})}\rightarrow \SL_{3n}(\overline{F^*})$ such that $\psi(\SL_3(\overline{\Lambda^*})
	)=\SL_{3n}(\overline{O^*})$. 
	\\\\
	{\bf Step C:}
	We get a topological central extension 
	\begin{equation}\label{eq:DME2}\tag{*}
		1 \rightarrow C \rightarrow E \overset{\psi \circ \rho}\rightarrow \SL_{3n}(\overline{F^*})\rightarrow 1
	\end{equation}
	with a continuous section $\eta\circ  \psi^{-1} $ over $\SL_{3n}(\overline{O^*})$ and a possibly discontinuous section over $\psi( \SL_3(D^*))$.  In order to prove Theorem \ref{thm:main}, it is enough to prove that Extension \eqref{eq:DME2} has a continuous section. In this step we show that in order to prove that Extension \eqref{eq:DME2} has a continuous section, it is enough to prove that it has a possibly discontinuous section over $\SL_{3n}(F^*)$. 
	
   Indeed, since $S$ contains a finite place, the group $\SL_{3n}(O)$ has the Congruence Subgroup Property in the strict sense so it is boundedly generated by elementary matrices.   It follows that every element of $\SL_3(O)$ is a product of a bounded number of commutators, so $\SL_3(O^*)$ is perfect.  If $\eta'$ is a possibly discontinuous section of \eqref{eq:DME2} over $\SL_{3n}(F^*)$ then $\eta \circ \psi^{-1}$ and $\eta'$ agree on the open subgroup $\SL_{3n}(F^*) \cap \SL_{3n}(\overline{O^*})=\SL_{3n}(O^*)$ of $\SL_{3n}(F^*)$. It follows that $\eta'$ is continuous so it extends to a continuous section of \eqref{eq:DME2}. 
	\\\\
	\noindent {\bf Step D:} For every $a,b \in (F^*)^\times$, denote   $h_1(a):=\diag(a,a^{-1},1,\dots,1)\in \SL_{3n}(F^*)$ and  $h_2(b):=\diag(b,1,b^{-1},1,\dots,1)\in \SL_{3n}(F^*)$. For every two commuting elements $g_1,g_2 \in \SL_{3n}(\overline{F^*})$, let $[g_1:g_2]\in E$ be 
	the commutator of some lifts of $g_1$ and $g_2$ to $E$. The element $[g_1:g_2]$ is independent of the choice of lifts. A Theorem of Steinberg implies that the extension \eqref{eq:DME2} has  a section  over $\SL_{3n}(F^*)$ if and only if for every $a,b \in (F^*)^\times$, $[h_1(a):h_2(b)]=1$.
	
	Let  $a=[a_n]_n,b=[b_n]_n \in (F^*)^\times$.  We can assume that for every $n$, $a_n,b_n \in F^\times$.  For every $n$, let
	$P_n$ be the set of primes ideals $\mathfrak{p}$ of $O$ such that $\val_{\mathfrak{p}}(a_n)\ne 0$ or $\val_{\mathfrak{p}}(b_n)\ne 0$.
	Then $P=[P_n]_n$ is small internal. A standard argument, see the proof of Item 2 of Proposition 6.3.4  of \cite{AM}, shows that \[[h_1(a):h_2(b)]=[\pi_P(h_1(a)):\pi_P(h_2(b))] \cdot [\pi_{P^c}(h_1(a)):\pi_{P^c}(h_2(b))].\]
	Since $\pi_{P^c}(h_1(a)),\pi_{P^c}(h_2(b))\in \SL_{3n}(\prod_U \overline{O})\subseteq  \SL_{3n}(\overline{O^*})$, 
	\[
	[\pi_{P^c}(h_1(a)):\pi_{P^c}(h_2(b))]= \Big[\eta \circ \psi^{-1}(\pi_{P^c}(h_1(a)))\, ,\, \eta \circ \psi^{-1}(\pi_{P^c}(h_2(b)))\Big]=1,
	\]	
	so
	\[[h_1(a):h_2(b)]=[\pi_P(h_1(a)):\pi_P(h_2(b))].\] 
	Since $C$ is discrete, for every pair $h_1,h_2 \in \SL_{3n}(\overline{F^*})$ of commuting elements  there exists a non-zero finitely generated ideal $\mathfrak{q} \lhd O^*$ such that if $g_1\in h_1\SL_{3n}(\overline{O^*},\bar{\mathfrak{q}})$ and $g_2 \in h_2\SL_{3n}(\overline{O^*},\bar{\mathfrak{q}})$ are commuting elements then $[g_1:g_2]=[h_1:h_2]$.   By the centrality of $C$,  for every two commuting elements $g_1,g_2 \in \SL_{3n}(\overline{F^*})$ and every $\ell \in \SL_{3n}(\overline{F^*})$, $[\ell g_1\ell^{-1}:\ell  g_2  \ell^{-1}]=[g_1:g_2]$. 
	Thus, by Step (C), Theorem \ref{thm:main} follows from:
	
	\begin{claim}\label{claim:Step_4} For every $a,b\in F^*$, every small internal subset $P$ and every non-zero finitely generated ideal $\mathfrak{q} \lhd O^*$, there exist a commuting pair $g_1,g_2 \in \SL_{3n}(\overline{F^*})$ and an element $\ell \in \SL_{3n}(\overline{F^*})$ such that 
		$\pi_P(\ell g_1 \ell^{-1}) \in \pi_P(h_1(a))\SL_{3n}(\overline{O^*},\overline{\mathfrak{q}})$, $\pi_P(\ell g_2\ell^{-1}) \in \pi_P(h_2(b))\SL_{3n}(\overline{O^*},\overline{\mathfrak{q}})$, and $[\pi_P(g_1):\pi_P(g_2)]=1$.
		
	\end{claim}		 
	\noindent
	{\bf Step E:} The goal of this step is to reduce Claim \ref{claim:Step_4} to  Claim \ref{claim:Step_5} below.

	\begin{lemma}\label{lemma:Pra_Rag}
		Let $c,d \in F^\times$, let $T$ be a finite set of places of $F$ which is disjoint from $S$, and, for every $v \in T$, let $n_v \in \N$. There exist a maximal subfield $K$ of $D$, two commuting elements $g_1,g_2 \in \SL_3(K)$, and an element $\ell \in \SL_3(D)$ such that for every $v \in T$, $\pi_v(\psi(\ell g_1 \ell^{-1})) \in \pi_v(h_1(c))\SL_{3n}(O_v; \mathfrak{p}_v^{n_v}O_v) $ and  $\pi_v(\psi(\ell g_2 \ell^{-1})) \in \pi_v(h_1(d))\SL_{3n}(O_v; \mathfrak{p}_v^{n_v}O_v)$. 
	\end{lemma}
	\begin{proof}
		
		The proof is very similar to \cite{PrRa}. By \cite[proof of Theorem 4.2]{PrRa}, there is a maximal subfield $K \subseteq D$ and, for each $v\in T$, an element $h_v\in \left( D \otimes F_v \right) ^ \times$ such that $\psi \left( h_v K_v h_v ^{-1} \right)$ is the set of diagonal matrices in $M_n(F_v)$. Thus, by weak approximation, there are  elements $x_a,x_b\in K$ such that
		\[
		\psi \left( h_v x_a h_v ^{-1} \right) \in \left( \begin{matrix} a & & & \\ & 1 & & \\ & & \ddots & \\ & & & 1 \end{matrix} \right)\cdot \GL_n \left( O_v,\mathfrak{p}_v^{n_v} \right), \quad \psi \left( h_v x_b h_v ^{-1} \right) \in \left( \begin{matrix} b & & & \\ & 1 & & \\ & & \ddots & \\ & & & 1 \end{matrix} \right)\cdot \GL_n \left( O_v,\mathfrak{p}_v^{n_v} \right)
		\]
		\[
		\psi \left( h_v x_a^{-1} h_v ^{-1} \right) \in \left( \begin{matrix} a^{-1} & & & \\ & 1 & & \\ & & \ddots & \\ & & & 1 \end{matrix} \right)\cdot \GL_n \left( O_v,\mathfrak{p}_v^{n_v} \right) \text{ and } \psi \left( h_v x_b^{-1} h_v ^{-1} \right) \in \left( \begin{matrix} b^{-1} & & & \\ & 1 & & \\ & & \ddots & \\ & & & 1 \end{matrix} \right)\cdot \GL_n \left( O_v,\mathfrak{p}_v^{n_v} \right),
		\]
		for all $v\in T$. If $P\in \SL_{3n}(F)$ is the permutation matrix of the involution $(2,n+1)(3,2n+1)$, then
		\[
		P \psi \left( \left( \begin{matrix} h_v & & \\ & h_v & \\ & & h_v ^{-2} \end{matrix} \right) \left( \begin{matrix} x_a & 0 & 0 \\ 0 & x_a ^{-1} & 0 \\ 0 & 0 & 1 \end{matrix} \right) \left( \begin{matrix} h_v ^{-1} & & \\ & h_v ^{-1} & \\ & & h_v ^{2} \end{matrix} \right) \right) P^{-1} \in h_1(a) \SL_{3n}(O_v; \mathfrak{p}_v^{n_v})
		\]
		and
		\[
		P \psi \left( \left( \begin{matrix} h_v & & \\ & h_v & \\ & & h_v ^{-2} \end{matrix} \right) \left( \begin{matrix} 1 & 0 & 0 \\ 0 & x_b & 0 \\ 0 & 0 & x_b ^{-1} \end{matrix} \right) \left( \begin{matrix} h_v ^{-1} & & \\ & h_v ^{-1} & \\ & & h_v ^{2} \end{matrix} \right) \right) P^{-1} \in h_2(b) \SL_{3n}(O_v; \mathfrak{p}_v^{n_v}),
		\]
		so we can take $g_1:=\left( \begin{matrix} x_a & 0 & 0 \\ 0 & x_a ^{-1} & 0 \\ 0 & 0 & 1 \end{matrix} \right) \in \SL_3(K)$, $g_2:=\left( \begin{matrix} 1 & 0 & 0 \\ 0 & x_b & 0 \\ 0 & 0 & x_b ^{-1} \end{matrix} \right) \in \SL_3(K)$, and $\ell$ as any element of $\SL_3(D)$ that approximates $\psi ^{-1} (P) \left( \begin{matrix} h_v & & \\ & h_v & \\ & & h_v ^{-2} \end{matrix} \right)$ for every $v\in T$.
	\end{proof}
	Let $a,b\in F^*$, let $P$ be a small internal set and let $\mathfrak{q} \lhd O^*$ be a non-zero finitely generated ideal  of $O^*$.  
	Lemma \ref{lemma:Pra_Rag} implies that  there exist a sequence $(K_n)_n$ of maximal subfields of $D$, two commuting elements $g_1,g_2 \in \SL_3(\prod_U K_n)$, and an element $\ell \in \SL_{3}(D^*)$ such that $\pi_P( \psi(\ell g_1 \ell^{-1})) \in \pi_P(h_1(a))\SL_{3n}(\overline{O^*},{\mathfrak{q}}\overline{O^*})$ and  $\pi_P(\psi(\ell g_2 \ell^{-1})) \in \pi_P(h_2(b))\SL_{3n}(\overline{O^*},{\mathfrak{q}}\overline{O^*})$.  Thus, in order to prove Claim \ref{claim:Step_4}, it is enough to show that 
	$[\pi_P(\psi(g_1)):\pi_P(\psi( g_2))]=1$.
	\begin{enumerate}[(a)]
		\item\label{item:Step_5a} 	The elements $\pi_P(\psi(g_1))$ and $\pi_P(\psi( g_2))$ are commuting elements which belongs to $\psi(\SL_3(\prod_U \overline{K_n}))$. By the last paragraph of \S\ref{sec:the_congruence_completion},  $\psi(\SL_3(\prod_U \overline{K_n}))$ is contained in  the closure of $\psi(\SL_3(\prod_U {K_n}))$. 
		Hence, in order to prove that $[\pi_P(\psi(g_1)):\pi_P(\psi( g_2))]=1$, it is enough to show that there exists a  section of Extension \eqref{eq:DME2} over the closure of $\psi(\SL_3(\prod_U {K_n}))$.
		\item\label{item:Step_5b}  Let $\xi$ be a section of Extension \eqref{eq:DME2} over $\psi(\SL_3(D^*))$ and recall that  $\eta\circ \psi^{-1}$ is a continuous section of Extension \eqref{eq:DME2} over $\psi(\SL_{3}(\overline{\Lambda^*}))$.		 If $\xi$ and $\eta \circ \psi^{-1}$  agree over some open subgroup of $\psi(\SL_3(\prod_U {K_n}))$ then $\xi$ can be extended to a continuous section over the closure of $\psi(\SL_3(\prod_U{K_n}))$. 
		\item\label{item:Step_5c}  $\xi$ and $\eta \circ 
		\psi^{-1}$ agree over the commutator subgroup of $\psi(\SL_3(\prod_U K_n)) \cap \psi(\SL_{3}(\Lambda^*))=\psi(\SL_3(\prod_U K_n)) \cap \SL_3(\overline{\Lambda^*}))$. Thus, in order to finish the proof it is enough to show that the commutator subgroup of $\SL_3(\prod_U K_n)) \cap \SL_3(\overline{\Lambda^*})$ is an open subgroup of $\SL_3(\prod_U K_n))$.
	\end{enumerate}
	
	Items \ref{item:Step_5a}, \ref{item:Step_5b} and \ref{item:Step_5c} imply that   $[\pi_P(\psi(g_1)):\pi_P(\psi( g_2))]=1$ if the following claim holds: 
	\begin{claim}\label{claim:Step_5}
		Let $(K_n)_n$ be a sequence of maximal subfields of $D$ and $U$ an open subgroup of  $\SL_3(\prod_U K_n)$ in the $S$-congruence topology. The commutator subgroup of $U$ is an open subgroup of  $\SL_3(\prod_U K_n)$.
	\end{claim}	
	
	\noindent {\bf Step F:} The goal of this step is to prove Claim \ref{claim:Step_5} and thus to complete the proof of Theorem \ref{thm:main}.  In order to prove Claim  \ref{claim:Step_5}, it is enough to show that there exists a constant $c$ such that for every maximal subfield $K$ of $D$ and every  open subgroup $V$ of $\SL_{3n}(K)$ (with respect to the $S$-congruence topology), there exists an open subgroup $W \le V$ such that every element of $W$ is the product of at most $c$ elements from $[V,V]$.   Corollary 3.13 of  \cite{Mor}  implies that there exists a constant $c$ such that for every maximal subfield $K$ of $D$ and every open subgroup $V$ of $\SL_{3n}(K)$, there exists a finite index subgroup $W $ of $  V$ such that every element of $W$ is the product of at most $c$ elements from $[V,V]$.   
	Since $S$ contains a finite place, $\SL_3(O_K)$ has the Congruence Subgroup Property in the strict sense so $W$ must be open. The proofs of Claim \ref{claim:Step_5} and  Theorem \ref{thm:main} is now complete.


\begin{thebibliography}{WWW}

\bibitem[AM]{AM} N.~Avni and C.~Meiri, Conjugacy width in uniform higher rank arithmetic groups of orthogonal type. arXiv:2304.13173.

\bibitem[Bas]{Bas}  H. Bass, {\em	K-theory and stable algebra.}
		Inst. Hautes \'{E}tudes Sci. Publ. Math.(1964), no.22, 5--60.
\bibitem[BBF]{BBF19} M. Bestvina, K. Bromberg and K. Fujiwara, The verbal width of acylindrically hyperbolic groups.
Algebr. Geom. Topol. 19 (2019), no. 1, 477--489.
\bibitem[Bor]{Bor83} A. Borel, On free subgroups of semisimple groups. Enseign. Math. (2) 29 (1983), no. 1-2, 151--164.
\bibitem[CK83]{CK83} D.~W. Carter and G.~E. Keller, Bounded elementary generation of ${\rm SL}\sb{n}({\cal O})$, Amer. J. Math. {\bf 105} (1983), no.~3, 673--687.
\bibitem[CRRZ22]{CRRZ22} P.~Corvaja, A.~Rapinchuk, J.~Ren, and U.~Zannier, 
Non-virtually abelian anisotropic linear groups are not boundedly generated.
Invent. Math. 227 (2022), no. 1, 1–26.
\bibitem[DeVa]{DV88} R.K. Dennis and L.N. Vaserstein, On a question of M. Newman on the number of commutators.
J. Algebra 118 (1988), no. 1, 150--161.
\bibitem[ER06]{ER06} I.~V. Erovenko and A.~S. Rapinchuk, Bounded generation of $S$-arithmetic subgroups of isotropic orthogonal groups over number fields, J. Number Theory {\bf 119} (2006), no.~1, 28--48.
\bibitem[LaSh]{LaSh09} M. Larsen and A. Shalev, Word maps and Waring type problems. J. Amer. Math. Soc. 22 (2009),
no. 2, 437--466.
\bibitem[Mor]{Mor} D. W. Morris, {\em Bounded generation of SL(n,A) (after D. Carter, G. Keller, and E. Paige).}
		New York J. Math.13(2007), 383--421.
\bibitem[NiSe1]{NiSe07a} N. Nikolov and D. Segal, On finitely generated profinite groups. I. Strong completeness and
uniform bounds. Ann. of Math. (2) 165 (2007), no. 1, 171--238.

\bibitem[NiSe2]{NiSe07b} N. Nikolov and D. Segal, On finitely generated profinite groups. II. Products in quasisimple
groups. Ann. of Math. (2) 165 (2007), no. 1, 239--273.        
\bibitem[PrRa]{PrRa}  G. Prasad, Gopal and M. S.  Raghunathan, {\em	On the congruence subgroup problem: determination of the ``metaplectic kernel''.}
		Invent. Math.71(1983), no.1, 21--42.

\bibitem[Tav91]{Tav91} O.~I. Tavgen, Bounded generability of Chevalley groups over rings of S-integer algebraic numbers. Math. USSR-Izv. {\bf 36} (1991), no.~1, 101--128; translated from Izv. Akad. Nauk SSSR Ser. Mat. {\bf 54} (1990), no.~1, 97--122, 221--222.
\bibitem[Wan]{Wan} S. Wang, 
		{\em 	On the commutator group of a simple algebra.}
		Amer. J. Math.72(1950), 323--334.
\end{thebibliography}
\end{document}